\newtheorem{theorem}{Theorem}[section]
\theoremstyle{definition}
\newtheorem{definition}[theorem]{Definition}
\newtheorem{example}[theorem]{Example}
\theoremstyle{remark}
\newtheorem{remark}{Remark}[section]
\numberwithin{equation}{section}
\begin{document}
	
	\setcounter{page}{1}
	
	
	\begin{center}
		{\Large \textbf{$w$-Invexity and Optimality Problem}}
		
		\bigskip

		\textbf{Musavvir Ali$^{a,*}$} and \textbf{Ehtesham Akhter$^b$}\\
		\textbf{}  \textbf{}\\

		{\small $^{a,~b}$  Department of Mathematics,\\ Aligarh Muslim University, Aligarh-202002, India}
	\end{center}
	\noindent
	\footnote{$^*$Corresponding author\\
		E-mail addresses: musavvir.alig@gmail.com (M. Ali),\\ ehteshamakhter111@gmail.com (E. Akhter). }
	\bigskip
	
	{\abstract
	We define $w$-invex set, $w$-preinvex, $w$-strictly preinvex, $w$-quasi preinvex, $w$-strictly quasi preinvex, $w$-semi-strictly  quasi preinvex, and $w$-pre pseudo-invex functions in this context. And these form a class of real functions, which is the generalisation of a family of preinvex functions. Here, we provide a thorough analysis of the core characteristics of these functions, along with numerous examples that help to illustrate the idea. Finally, $w$-quasi preinvex, $w$-strictly preinvex, and $w$-strictly quasi preinvex functions are used to analyse the optimisation problems.}\\
	\section{Introduction}
	Numerous parts of mathematical programming, such as the adequate optimality condition and the duality theorem, deeply rely on convexity. One of the generalised convex functions is an invex function, which was first defined by Hanson \cite{Hanson}. Numerous generalisations of this idea have been offered in the literature. Invex functions were transformed into $\rho-(\eta, \theta)$-invex functions by Zalmai \cite{Zalmai}. Semistrictly preinvex functions on $\mathbb{R}^n$ were first developed by Yang and Li \cite{Li}. In addition to expanding several findings on convexity and optimization developed for linear spaces leading to Riemannian manifolds, Rapcsak \cite{Rapcsak} also obtained a generalisation known as geodesic convexity. In Riemannian spaces, the problem of convex programming with duality conditions was established by Udriste \cite{Udriste} Pini \cite{Pini}  introduced the idea of invex functions there. A vector programming problem described on a differentiable manifold admitting the necessary and sufficient conditions of KKT(Karush-Kuhn-Tucker) type by Mititelu \cite{Mititelu}. Ferrara and Mititelu proposed the Mond-Weir form of duality for vector programming issues on differentiable manifolds \cite{Ferrara}.

On convex-like functions, the alternative theorem and the assumption for the semi-preinvex function as a convex-like function together used to develop Fritz-John condition for ICOP (inequality constrained optimization problem, \cite{Jeyakumar} ). Fritz-John condition is also introduced as an example of the pre-variational inequality problem and duality. Following Hanson and Mond \cite{Hanson M} and Rueda and Hanson \cite{RUEDA}, who established the relationships between the K.T. point of a function of Type I and Type II  and at least one limited mathematical programming issue (see (\cite{Hanson M},\cite{RUEDA}).
The link between $w$-invexity and some proven generalizations of convexity will be covered in this study. The introduction to generalized convexity and related subjects are covered in Section 2. It is demonstrated that there is a fairly natural process to follow for $w$-quasi-preinvexity, $w$-strictly quasi-invexity, and $w$-semi-strictly quasi-invexity of the real functions in section 3, where $w$-preinvex, $w$-quasi preinvex, $w$-strictly quasi invex, and $w$-semi-strictly quasi preinvex functions will be discussed. We have also shown that $w$-preinvex functions provide desired outcomes, and it follows that $w$-preinvex functions can be transformed easily to become regular invex functions.
	\section{Preliminaries}
	\begin{definition}(\cite{Weir}-\cite{Weir T})
		A non empty subset $X$ of $\mathbb{R}^n$ is known as invex if $\exists$ a function $\eta : \mathbb{R}^n \times  \mathbb{R}^n \rightarrow  \mathbb{R}^n$ such that $\forall$ $ z_1, z_2 \in X$, $\forall$ $\delta \in [0, 1]$, we obtain $z_2+ \delta \eta(z_1, z_2) \in X$.
	\end{definition}
\begin{definition}(\cite{Weir}-\cite{Weir T})
A non empty subset $X$ of $\mathbb{R}^n$. A function $ h : X \rightarrow \mathbb{R}$ is known as preinvex if $\exists$ $\eta : \mathbb{R}^n \times  \mathbb{R}^n \rightarrow  \mathbb{R}^n$,  a function, such that $\forall$ $ z_1, z_2 \in X$, $\forall$ $\delta \in [0, 1]$,we obtain $$ h(z_2+\delta \eta(z_1, z_2))\leq \delta h(z_1)+(1-\delta)h(z_2).$$	
\end{definition}
\begin{definition}\cite{p invexity}
A non empty subset $X$ of $\mathbb{R}^n$. A function $ h : X \rightarrow \mathbb{R}$ is known as prequasi invex if $\exists$  $\eta : \mathbb{R}^n \times  \mathbb{R}^n \rightarrow  \mathbb{R}^n$, a function, s.t., $\forall$ $ z_1, z_2 \in X$, $\forall$ $\delta \in [0, 1]$,we obtain $$ h(z_2+\delta \eta(z_1, z_2))\leq \max \{h(z_1), h(z_2)\}.$$		
\end{definition}
\begin{definition}\cite{Yang}
	A non empty subset $X$ of $\mathbb{R}^n$. A function $ h : X \rightarrow \mathbb{R}$ is known as strictly prequasi invex if $\exists$ a function $\eta : \mathbb{R}^n \times  \mathbb{R}^n \rightarrow  \mathbb{R}^n$ such that $\forall$ $ z_1, z_2 \in X$ $z_1\neq z_2,$ $\forall$ $\delta \in (0, 1)$,we obtain $$ h(z_2+\delta \eta(z_1, z_2))
	< \max \{h(z_1), h(z_2)\}.$$	
\end{definition}
\begin{definition}\cite{Yang}
	A non empty subset $X$ of $\mathbb{R}^n$. A function $ h : X \rightarrow \mathbb{R}$ is known as semi-strictly prequasi invex if $\exists$ a function $\eta : \mathbb{R}^n \times  \mathbb{R}^n \rightarrow  \mathbb{R}^n$ such that $\forall$ $ z_1, z_2 \in X$ $h(z_1)\neq h(z_2),$ $\forall$ $\delta \in (0, 1)$, we obtain $$ h(z_2+\delta \eta(z_1, z_2))
	< \max \{h(z_1), h(z_2)\}.$$	
\end{definition}
	\section{$w$-invexity}
	\begin{definition}
	A non-empty subset $X$ of $\mathbb{R}^n$ is said to be {\bf$w$-invex} if there exists map $ \eta : \mathbb{R}^n \times \mathbb{R}^n \rightarrow \mathbb{R}^n$ and $w : \mathbb{R}^n \rightarrow \mathbb{R}^n$ such that $$ w(z_2)+ \delta \eta(z_1,  w(z_2)) \in X, ~~\forall z_1, z_2 \in X ~~\mbox{and} ~~\delta \in [0, 1]. $$
	\end{definition}
\begin{remark}
	Every invex set is $w$-invex (as take $w$ is an identity map) however converse isn\textsc{\char13}t always true.
\end{remark}
\begin{example}
	 	Let $ \eta :  [0, \infty) \times [0, \infty)  \rightarrow R $ be given by: $$ \eta(z_1, z_2)=z_1(z_2-2),$$  and let the function $ w :  [0, \infty)  \rightarrow R $ be given by: $$ w(z_2)=z_2+2. $$ Then $X$ is not a invex set w.r. to map $\eta$ but it is $w$-invex set w.r.to the maps $\eta$ and $w$.
\end{example}
\begin{definition}
	
		Let $X$ be a $w$-invex set. A function $h :  X \rightarrow \mathbb{R}$ is said to be

	\noindent	(i) {\bf$w$-preinvex}  if $\forall$ points  $z_1, z_2  \in X $, and  $ 0 \leq \delta \leq 1$, such that 
		$$ h( w(z_2)+ \delta \eta(z_1,  w(z_2)))\leq \delta h(z_1)+(1-\delta)h(z_2).
		$$ 
		(ii)  {\bf $w$-strictly preinvex} if $\forall$ points  $z_1, z_2  \in X $, $z_1\neq z_2$ and  $ 0 < \delta < 1$, such that 	$$ h( w(z_2)+ \delta \eta(z_1,  w(z_2)))< \delta h(z_1)+(1-\delta)h(z_2).$$ 
		(iii)  {\bf $w$-prequasi-invex} if $\forall$ points  $z_1, z_2  \in X $,  and  $ 0 \leq \delta \leq 1$, such that 	$$ h( w(z_2)+ \delta \eta(z_1,  w(z_2)))\leq \mbox{max} \{ h(z_1), h(z_2)\}.$$ 
		(iv)  {\bf $w$-strictly prequasi-invex} if $\forall$ points  $z_1, z_2  \in X $, $z_1\neq z_2$ and  $ 0 < \delta < 1$, such that 	$$ h( w(z_2)+ \delta \eta(z_1,  w(z_2)))< \mbox{max} \{ h(z_1), h(z_2)\}.$$ 
		(v)  {\bf $w$-semi-strictly prequasi-invex} if $\forall$ points  $z_1, z_2  \in X $, $h(z_1)\neq h(z_2)$ and  $ 0 < \delta < 1$, such that 	$$ h( w(z_2)+ \delta \eta(z_1,  w(z_2)))< \mbox{max} \{ h(z_1), h(z_2)\}.$$ 
\end{definition}
\begin{remark}
	In general, every $w$-strictly preinvex function is a also $w$-preinvex function, however converse isn\textsc{\char13}t always true. e.g., (3.7).
\end{remark}
\begin{remark}
	In general, every $w$-preinvex function is a also $w$-pre quasi-preinvex function, however converse isn\textsc{\char13}t always true. e.g., (3.6).
\end{remark}
\begin{remark}
	In general, every preinvex function is a also $w$-preinvex function (as take $w$ is an identity map), however converse isn\textsc{\char13}t always true. e.g.,(3.7).
\end{remark}
	\begin{example}
	Consider $ h :  \mathbb{R}  \rightarrow \mathbb{R} , $ be defined as: $ h(z_1)=z_1+k , $ where $k$ is a constant.
		
		Let the function $ w : \mathbb{R}  \rightarrow \mathbb{R} $ be defined as: $ w(z_2)=z_2-7,   $ and  consider $ \eta : \mathbb{R} \times \mathbb{R} \rightarrow \mathbb{R} $ as :  $ \eta(z_1,z_2)=z_1-z_2-6$ for all $z_1,z_2 \in \mathbb{R}$.
		Then the function $h$ is $w$-preinvex function w.r.to  $\eta$ and $w$.
		
	\end{example}
	\begin{example}
	Consider $ h :  \mathbb{R}  \rightarrow \mathbb{R} , $ be defined as: $ h(z_1)=z_1+k , $ where $k$ is a constant.
	
	Consider the function $ w : \mathbb{R}  \rightarrow \mathbb{R} $ be defined as: $ w(z_2)=z_2+6,   $ and  consider $ \eta : \mathbb{R} \times \mathbb{R} \rightarrow \mathbb{R} $ be defined as :  $ \eta(z_1,z_2)=z_1-z_2+6$ for all $z_1,z_2 \in \mathbb{R}$.
	Then the function $h$ is $w$-preinvex function w.r.to  $\eta$ and $w$, but not $w$-strictly preinvex and preinvex function.
	
\end{example}
	\begin{example}
	Consider $ h :  \mathbb{R}  \rightarrow \mathbb{R} , $ be defined as: $ h(z_1)=z_1^5 $.	Consider the function $ w : \mathbb{R}  \rightarrow \mathbb{R} $ be defined as: $ w(z_2)=z_2-6,   $ and  consider $ \eta : \mathbb{R} \times \mathbb{R} \rightarrow \mathbb{R} $ be defined as :  $ \eta(z_1,z_2)=z_1-z_2-6$ for all $z_1,z_2 \in \mathbb{R}$.
	Then the function $h$ is $w$-prequasi preinvex function w.r.to  $\eta$ and $w$, but not $w$-preinvex function.
	
\end{example}
	\begin{example}
	Let $ h :  \mathbb{R}  \rightarrow \mathbb{R} , $ be defined as: $ h(z_1)=z_1+k , $ where $k$ is a constant.
	
	let the function $ w : \mathbb{R}  \rightarrow \mathbb{R} $ be defined as: $ w(z_2)=z_2+6,   $ and  consider $ \eta : \mathbb{R} \times \mathbb{R} \rightarrow \mathbb{R} $ be defined as :  $ \eta(z_1,z_2)=z_1-z_2+6$ for all $z_1,z_2 \in \mathbb{R}$.
	It is simple to demonstrate that while $h$ is $w$-preinvex w.r.to $\eta$ and $w$ and, it is not preinvex w.r.to $\eta$.

\end{example}
	\begin{example}
	Let $ h :  [0, \infty)  \rightarrow \mathbb{R} , $ be defined as: $ h(z_1)= \begin{cases}
		11, \text{ if } z_1 \in [0, 11) \\
		-11, \text{ if }  z_1 \in [11, \infty)
	\end{cases}  ,$ 	
	let the function $ w :  [0, \infty)   \rightarrow \mathbb{R} $ be defined as: $ w(z_2)=z_2+11,   $ and  consider $ \eta :  [0, \infty)  \times  [0, \infty)  \rightarrow \mathbb{R} $ be defined as :  $ \eta(z_1,z_2)=z_1^2+z^2_2+11$ for all $z_1,z_2 \in  [0, \infty) $.
	Then the function $h$ is $w$-pre quasi-invex function w.r.to $\eta$ and $w$.
	
\end{example}
\begin{theorem}
	  $ h : X \rightarrow \mathbb{R}$ is $w$-preinvex function w.r.to $ \eta$ and $w$ if and only if the epigraph of $h$, $$ epi(h)=\{(z, \alpha) : \alpha \in \mathbb{R}, h(z)\leq \alpha \},$$ is $w$-invex w.r.to the same $\eta$ and $w$,  where $X$ is any non-empty $w$-invex subset  of $\mathbb{R}^n$.
\end{theorem}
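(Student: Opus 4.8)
The plan is to prove both directions straight from the defining inequalities, once I fix the convention by which $\eta$ and $w$ are made to act on the product space $\mathbb{R}^{n}\times\mathbb{R}\cong\mathbb{R}^{n+1}$ in which $epi(h)$ sits. I would set $\bar w(z,\alpha)=(w(z),\alpha)$ and $\bar\eta\big((z_1,\alpha_1),(z_2,\alpha_2)\big)=\big(\eta(z_1,z_2),\,\alpha_1-\alpha_2\big)$, so that $\bar w(z_2,\alpha_2)+\delta\,\bar\eta\big((z_1,\alpha_1),\bar w(z_2,\alpha_2)\big)=\big(w(z_2)+\delta\eta(z_1,w(z_2)),\,\delta\alpha_1+(1-\delta)\alpha_2\big)$. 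With this reading the phrase ``the same $\eta$ and $w$'' is unambiguous: the first coordinate is exactly the displacement appearing in the definition of $w$-preinvexity, and the second is the plain affine combination of the heights.

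For the necessity direction I would take $(z_1,\alpha_1),(z_2,\alpha_2)\in epi(h)$ — so $z_1,z_2\in X$ with $h(z_1)\le\alpha_1$ and $h(z_2)\le\alpha_2$ — and $\delta\in[0,1]$. Since $X$ is $w$-invex, $w(z_2)+\delta\eta(z_1,w(z_2))\in X$, so $h$ is defined there; then $w$-preinvexity of $h$ together with $h(z_i)\le\alpha_i$ yields $h\big(w(z_2)+\delta\eta(z_1,w(z_2))\big)\le\delta h(z_1)+(1-\delta)h(z_2)\le\delta\alpha_1+(1-\delta)\alpha_2$, which is precisely the statement that the interpolated point lies in $epi(h)$; hence $epi(h)$ is $w$-invex w.r.t. $\bar\eta,\bar w$. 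For sufficiency I would run this backwards: feed the two canonical points $(z_1,h(z_1))$ and $(z_2,h(z_2))$ of $epi(h)$ into the $w$-invexity hypothesis for the epigraph, read off that $\big(w(z_2)+\delta\eta(z_1,w(z_2)),\,\delta h(z_1)+(1-\delta)h(z_2)\big)\in epi(h)$ for every $\delta\in[0,1]$, and unpack the definition of $epi(h)$ to recover the inequality defining $w$-preinvexity.

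I do not expect a serious obstacle here; the only point requiring care is the bookkeeping in the first step, namely specifying how $\eta$ and $w$ are promoted to the extra coordinate so that the statement is literally true, and checking that this promotion is simultaneously compatible with the definition of a $w$-invex set and with the definition of a $w$-preinvex function (in particular that the second coordinate of the combination really is the affine combination $\delta\alpha_1+(1-\delta)\alpha_2$). Once that convention is in place, each implication is a single application of one defining inequality, and $w$-invexity of $X$ is used only to keep the $X$-component of the interpolant inside the domain of $h$.
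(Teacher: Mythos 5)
Your proof is correct and follows essentially the same route as the paper's: the necessity direction applies the $w$-preinvexity inequality to two epigraph points and bounds by $\delta\alpha_1+(1-\delta)\alpha_2$, and the sufficiency direction feeds the canonical points $(z_i,h(z_i))$ into the $w$-invexity of $epi(h)$. Your explicit construction of the lifted maps $\bar w$ and $\bar\eta$ on $\mathbb{R}^{n}\times\mathbb{R}$ is a worthwhile clarification of a convention the paper leaves implicit, but it does not change the argument.
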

\begin{proof}
	Consider $ h : X \rightarrow \mathbb{R}$ be a $w$-preinvex function w.r.to $\eta$ and $w$. Suppose that $(z_1, \alpha) $ and $(z_2, \beta ) \in epi(h)$, with $ z_1, z_2 \in X$, $h(z_1)\leq \alpha$ and  $h(z_2)\leq \beta$, for $\alpha, \beta \in \mathbb{R} $. Thus,
	 \begin{eqnarray*}
	 		 h( w(z_2)+ \delta \eta(z_1,  w(z_2)))&\leq& \delta h(z_1)+(1-\delta)h(z_2).\notag\\
	 		 &=& \delta \alpha +(1-\delta)\beta, ~~\mbox{for}~ \delta \in [0, 1] \notag\\
	 \end{eqnarray*}
 which implies that
 $$( w(z_2)+ \delta \eta(z_1,  w(z_2)), ~\delta \alpha +(1-\delta)\beta) \in epi(h),$$
 showing that $epi(h)$ is $w$-invex w.r.to the maps $\eta$ and $w$.
 Conversely, assume that $epi(h)$ is $w$-invex set w.r.to the maps $\eta$ and $w$. Take $z_1, z_2 \in X$. Then $(z_1, h(z_1)), (z_2, h(z_2)) \in epi(h)$, it follows that
  $$( w(z_2)+ \delta \eta(z_1,  w(z_2)), ~\delta h(z_1) +(1-\delta)h(z_2)) \in epi(h), ~~\forall \delta \in [0, 1]$$ $\implies$  $$h( w(z_2)+ \delta \eta(z_1,  w(z_2)))\leq \delta h(z_1)+(1-\delta)h(z_2).$$ 
  By doing so, it is demonstrated that h is a w-preinvex function w.r.to the map $\eta$ and $w$. Completeness of the proof.
  
\end{proof}
\begin{theorem}
	Consider $h : X \rightarrow \mathbb{R}$ be a $w$-preinvex function w.r.to the functions $\eta$ and $w$,  where $X$ is any non-empty $w$-invex subset  of $R^n$. Then, the level set $$ M_{\alpha}=\{z \in X, h(z)\leq \alpha, \alpha \in \mathbb{R}\}$$ is $w$-invex w.r.to the same map $\eta$ and $w$.
\end{theorem}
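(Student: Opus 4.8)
The plan is to argue directly from the two defining inequalities, exactly mirroring the forward direction of the preceding epigraph theorem. Fix $\alpha \in \mathbb{R}$ and pick two arbitrary points $z_1, z_2 \in M_{\alpha}$, so that $z_1, z_2 \in X$ with $h(z_1) \le \alpha$ and $h(z_2) \le \alpha$. Fix $\delta \in [0,1]$. First I would invoke the hypothesis that $X$ is $w$-invex w.r.to $\eta$ and $w$ to guarantee that the point $w(z_2) + \delta \eta(z_1, w(z_2))$ actually lies in $X$, so that $h$ is defined there and membership in $M_{\alpha}$ is even meaningful.

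Next I would apply the $w$-preinvexity of $h$ (Definition 3.3(i)) to $z_1, z_2$ and the chosen $\delta$, giving
$$ h\bigl(w(z_2) + \delta \eta(z_1, w(z_2))\bigr) \le \delta\, h(z_1) + (1-\delta)\, h(z_2). $$
Then I would bound the right-hand side using $h(z_1) \le \alpha$ and $h(z_2) \le \alpha$ together with $\delta \ge 0$ and $1-\delta \ge 0$, obtaining $\delta h(z_1) + (1-\delta) h(z_2) \le \delta \alpha + (1-\delta)\alpha = \alpha$. Chaining the two inequalities yields $h\bigl(w(z_2) + \delta \eta(z_1, w(z_2))\bigr) \le \alpha$, hence $w(z_2) + \delta \eta(z_1, w(z_2)) \in M_{\alpha}$.

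Since $z_1, z_2 \in M_{\alpha}$ and $\delta \in [0,1]$ were arbitrary, this is precisely the statement that $M_{\alpha}$ is $w$-invex w.r.to the same $\eta$ and $w$, completing the proof. There is no real obstacle here; the only point that deserves a line of care is the appeal to $w$-invexity of $X$ to ensure the combined point stays in the domain of $h$ before the preinvexity inequality is applied, and the sign check that makes the convex-combination bound by $\alpha$ valid. A remark worth appending is that the conclusion holds simultaneously for every $\alpha$, so all sublevel sets of a $w$-preinvex function are $w$-invex with a common pair $(\eta, w)$.
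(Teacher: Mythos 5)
Your proposal is correct and follows essentially the same argument as the paper: take $z_1,z_2\in M_\alpha$, apply the $w$-preinvexity inequality, and bound the convex combination $\delta h(z_1)+(1-\delta)h(z_2)$ by $\alpha$. If anything, your version is slightly more careful than the paper's, which writes an equality where an inequality belongs and concludes only that the combined point lies in $X$ rather than explicitly in $M_\alpha$.
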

\begin{proof}
Let $z_1, z_2 \in M_{\alpha}$. Then $z_1, z_2  \in X$ and $h(z_1)\leq \alpha, h(z_2)\leq \alpha $. Since $h$ is $w$-preinvex then, we have, $\forall \in [0, 1]$, 
 \begin{eqnarray*}
	h( w(z_2)+ \delta \eta(z_1,  w(z_2)))&\leq& \delta h(z_1)+(1-\delta)h(z_2).\notag\\
	&=& \delta \alpha +(1-\delta)\alpha, ~~\mbox{for}~ \delta \in [0, 1] \notag\\
		&=& \alpha. \notag\\
\end{eqnarray*}
This shows that $ w(z_2)+ \delta \eta(z_1,  w(z_2)) \in X$, and the level set $M_{\alpha}$ is a $w$-invex set w.r.to the map $\eta$ and $w$.
\end{proof}
\begin{theorem}
	The function $h: X \rightarrow \mathbb{R}$ is $w$-quasi preinvex w.r.to the same $\eta$ and $w$ $\iff$ the level set $$ M_{\alpha}=\{z \in X, h(z)\leq \alpha, \alpha \in \mathbb{R}\}$$ is $w$-invex w.r.to the same $\eta$ and $w$,  where $X$ is any non-empty $w$-invex subset  of $R^n$.
\end{theorem}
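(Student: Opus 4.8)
The plan is to prove the two implications separately: the forward implication reworks the argument of the preceding level-set theorem (the one for $w$-preinvex functions), while the converse is a direct unpacking of the definition of $w$-quasi preinvexity.

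For the forward implication I would assume $h$ is $w$-quasi preinvex w.r.to $\eta$ and $w$, fix $\alpha\in\mathbb{R}$, and take $z_1,z_2\in M_\alpha$, so that $z_1,z_2\in X$ with $h(z_1)\le\alpha$ and $h(z_2)\le\alpha$, hence $\max\{h(z_1),h(z_2)\}\le\alpha$. Since $X$ is $w$-invex, $w(z_2)+\delta\eta(z_1,w(z_2))\in X$ for every $\delta\in[0,1]$, and $w$-quasi preinvexity gives
\[
h\big(w(z_2)+\delta\eta(z_1,w(z_2))\big)\le\max\{h(z_1),h(z_2)\}\le\alpha ,
\]
so $w(z_2)+\delta\eta(z_1,w(z_2))\in M_\alpha$ for all $\delta\in[0,1]$; that is, $M_\alpha$ is $w$-invex w.r.to the same $\eta$ and $w$. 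This is the verbatim analogue of the preceding theorem, with the bound $\max\{h(z_1),h(z_2)\}$ in place of $\delta\alpha+(1-\delta)\alpha$.

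For the converse I would assume every level set $M_\alpha$ is $w$-invex w.r.to $\eta$ and $w$. Given arbitrary $z_1,z_2\in X$, the key move is to set $\alpha:=\max\{h(z_1),h(z_2)\}$, which places both $z_1$ and $z_2$ in $M_\alpha$; then $w$-invexity of $M_\alpha$ yields $w(z_2)+\delta\eta(z_1,w(z_2))\in M_\alpha$ for all $\delta\in[0,1]$, i.e.
\[
h\big(w(z_2)+\delta\eta(z_1,w(z_2))\big)\le\alpha=\max\{h(z_1),h(z_2)\},
\]
and since $z_1,z_2$ were arbitrary this is exactly $w$-quasi preinvexity of $h$ w.r.to $\eta$ and $w$.

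I do not expect any genuine obstacle here. The only thing to be careful about is the choice $\alpha=\max\{h(z_1),h(z_2)\}$ in the converse --- it is precisely what forces both test points into a single level set so that the $w$-invexity hypothesis can be applied to the segment $w(z_2)+\delta\eta(z_1,w(z_2))$ that appears in the definition --- together with the observation that the argument delivers only the non-strict inequality, which is all the definition of $w$-quasi preinvexity demands.
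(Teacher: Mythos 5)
Your proof is correct and follows essentially the same route as the paper: the forward direction mirrors the preceding level-set theorem with $\max\{h(z_1),h(z_2)\}\le\alpha$ in place of the convex combination, and the converse uses the same key choice $\alpha=\max\{h(z_1),h(z_2)\}$ to place both points in one level set. If anything, your converse is written slightly more carefully than the paper's, since you deduce the inequality from membership of $w(z_2)+\delta\eta(z_1,w(z_2))$ in $M_\alpha$ rather than merely in $X$.
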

\begin{proof}
	Suppose that the function $h$ is $w$-quasi preinvex w.r.to the maps $\eta$ and $w$. Let $z_1,z_2 \in M_{\alpha}.$ Then,  $z_1,z_2 \in X$ and $\mbox{max} \{h(z_1),h(z_2)\} \leq \alpha$. Since $X$ is a $w$-invex set, so, $\forall ~\delta \in [0, 1],$ $$  ( w(z_2)+ \delta \eta(z_1,  w(z_2)) \in X.$$ Since $h$ is $w$-quasi preinvex function, we have 	$$ h( w(z_2)+ \delta \eta(z_1,  w(z_2)))\leq \mbox{max} \{ h(z_1), h(z_2)\}\leq \alpha,$$ $\implies$  $$   (w(z_2)+ \delta \eta(z_1,  w(z_2)) \in M_{\alpha}.$$ This implies that $M_{\alpha}$ is a $w$-invex set w.r.to the maps $\eta$ and $w$.\\
	
	Conversely assume that the level set $M_{\alpha}$ is $w$-invex w.r.to the maps $\eta$ and $w$. Let $z_1, z_2 \in M_{\alpha}$ for $\mbox{max}\{h(z_1), h(z_2)\}=\alpha.$ Since $X$ is a $w$-invex set, we have  $$w(z_2)+ \delta \eta(z_1,  w(z_2)) \in X,$$ implies that 	$$ h( w(z_2)+ \delta \eta(z_1,  w(z_2)))\leq \mbox{max} \{ h(z_1), h(z_2)\}= \alpha.$$ This prove that $h$ is a $w$-quasi preinvex function w.r.to the functions $\eta$ and $w$.
\end{proof}
\begin{theorem}
	Assume that $ h : X \rightarrow \mathbb{R}$ is a $w$-preinvex function w.r.to the maps $\eta$ and $w$,  where $X$ is any non-empty $w$-invex subset  of $R^n$ and $\nu= inf_{z\in X} h(z).$ Then, the set $$ F=\{z\in X : h(z)=\nu \}$$ is $w$-invex w.r.to the same $\eta$ and $w$. If $h$ is $w$-strictly preinvex then $F$ is a singleton.
\end{theorem}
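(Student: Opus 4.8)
The plan is to run essentially the same two‑sided squeeze that powers Theorems 3.4 and 3.5: the value $\nu$ is at once an \emph{upper} bound for $h$ along the invex path (via $w$-preinvexity) and a \emph{lower} bound for $h$ on all of $X$ (via the definition of infimum), and forcing these two to agree pins $h$ to $\nu$ along the path. Throughout I write $p_\delta := w(z_2)+\delta\,\eta(z_1,w(z_2))$.

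For the $w$-invexity of $F$, I would take arbitrary $z_1,z_2\in F$ (so $h(z_1)=h(z_2)=\nu$) and any $\delta\in[0,1]$; if $F=\varnothing$ there is nothing to prove. Since $X$ is $w$-invex, $p_\delta\in X$, hence $h(p_\delta)\ge \nu$ because $\nu=\inf_{z\in X}h(z)$. On the other hand $w$-preinvexity gives
$$h(p_\delta)\le \delta h(z_1)+(1-\delta)h(z_2)=\delta\nu+(1-\delta)\nu=\nu .$$
Combining, $h(p_\delta)=\nu$, i.e. $p_\delta\in F$, which is exactly what it means for $F$ to be $w$-invex with respect to the same $\eta$ and $w$.

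For the strict statement, suppose $h$ is $w$-strictly preinvex and, toward a contradiction, that $F$ contains two distinct points $z_1\ne z_2$. Choose any $\delta\in(0,1)$; as before $p_\delta\in X$, so $h(p_\delta)\ge\nu$, while now $w$-strict preinvexity yields the strict inequality $h(p_\delta)<\delta h(z_1)+(1-\delta)h(z_2)=\nu$, contradicting $\nu=\inf_{z\in X}h(z)$. Hence $F$ has at most one element, and since it is (by hypothesis) non‑empty, $F$ is a singleton.

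The argument is short and more or less forced; the only delicate point I would flag is the implicit assumption that the infimum $\nu$ is attained, so that $F\ne\varnothing$ and the word ``singleton'' is meaningful — note that the $w$-invexity half of the theorem holds even without attainment (vacuously). No compactness or continuity hypotheses beyond those stated are needed, because both bounds on $h(p_\delta)$ are obtained purely algebraically from the defining inequalities of $w$-preinvexity and $w$-strict preinvexity.
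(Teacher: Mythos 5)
Your proof is correct and follows essentially the same squeeze the paper uses: $w$-preinvexity bounds $h$ at the point $w(z_2)+\delta\eta(z_1,w(z_2))$ above by $\nu$, and the infimum bounds it below, forcing membership in $F$, with the strict inequality yielding the contradiction in the singleton case. If anything, your version is slightly more careful than the paper's, which omits the explicit lower bound $h(p_\delta)\ge\nu$ and the remark that non-emptiness of $F$ requires the infimum to be attained.
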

\begin{proof}
	Assume that $z_1, z_2 \in X$ and $ \delta \in [0, 1]$. Since $h$ is a $w$-preinvex then, we have
	 \begin{eqnarray*}
		h( w(z_2)+ \delta \eta(z_1,  w(z_2)))&\leq& \delta h(z_1)+(1-\delta)h(z_2)\notag\\
		&=& \delta \nu +(1-\delta)\nu, ~~\mbox{for}~ \delta \in [0, 1] \notag\\
		&=& \nu. \notag\\
	\end{eqnarray*}
	This follows that $$ w(z_2)+\delta \eta(z_1, w(z_2)) \in F,$$ which implies that $F$ is $w$-invex w.r.to the maps $\eta$ and $w$.\\
	For the other part, contrarily suppose that $h(z_1)=h(z_2)=\nu$. Since $X$ is a $w$-invex set, so $$ w(z_2)+\delta \eta(z_1, w(z_2)) \in F,~~ \forall ~~\delta\in [0, 1].$$ Also, since $h$ is $w$-strictly preinvex, we have 
	 \begin{eqnarray*}
		h( w(z_2)+ \delta \eta(z_1,  w(z_2)))&<& \delta h(z_1)+(1-\delta)h(z_2)\notag\\
		&=& \delta \nu +(1-\delta)\nu, ~~\mbox{for}~ \delta \in [0, 1] \notag\\
		&=& \nu. \notag\\
	\end{eqnarray*}
This contradiction the fact that $\nu= inf_{z\in X} h(z).$ Therefore, $F$ is singleton.
\end{proof}
\begin{theorem}
	Let $ h : X \rightarrow \mathbb{R}$ is a $w$-preinvex function on $X$,  where $X$ is any non-empty $w$-invex subset  of $R^n$. Therefore, any local minimum point is also a global minimum point.

\end{theorem}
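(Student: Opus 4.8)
The natural plan is a proof by contradiction following the classical template for generalized-convex functions. Suppose $\bar z\in X$ is a local minimum of $h$ that fails to be a global minimum. Then on one hand there is a radius $\varepsilon>0$ with $h(\bar z)\le h(z)$ for every $z\in X$ satisfying $\|z-\bar z\|<\varepsilon$, and on the other hand there is a point $z^{*}\in X$ with $h(z^{*})<h(\bar z)$.

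First I would apply the defining inequality of a $w$-preinvex function with $z_{1}=z^{*}$ and $z_{2}=\bar z$: for every $\delta\in[0,1]$,
$$h\big(w(\bar z)+\delta\,\eta(z^{*},w(\bar z))\big)\le \delta\,h(z^{*})+(1-\delta)\,h(\bar z)<h(\bar z),$$
where the strict inequality holds for every $\delta\in(0,1]$ precisely because $h(z^{*})<h(\bar z)$. Since $X$ is $w$-invex, the point $u_{\delta}:=w(\bar z)+\delta\,\eta(z^{*},w(\bar z))$ lies in $X$ for each such $\delta$. The remaining step is to let $\delta\to 0^{+}$, choose $\delta$ small enough that $u_{\delta}$ falls inside the $\varepsilon$-ball about $\bar z$, and thereby conclude $h(\bar z)\le h(u_{\delta})<h(\bar z)$, a contradiction; hence $\bar z$ must in fact be a global minimum.

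The main obstacle sits in that last step. As $\delta\to 0^{+}$ one has $u_{\delta}\to w(\bar z)$ rather than $u_{\delta}\to\bar z$, so $u_{\delta}$ is guaranteed to enter every neighbourhood of $\bar z$ only when $w(\bar z)=\bar z$ — which is exactly the situation when $w$ is the identity map, recovering the well-known statement for ordinary preinvex functions. To make the argument go through in full generality I would either restrict attention to local minima that are fixed points of $w$, or add a mild continuity/compatibility hypothesis on $w$ and $\eta$ (in the spirit of the classical ``Condition C'') ensuring that the curve $\delta\mapsto u_{\delta}$ actually approaches $\bar z$; under any such assumption the contradiction derived above closes the proof, and the theorem follows.
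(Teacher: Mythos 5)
Your proposal follows essentially the same route as the paper's proof: assume $z_2$ is a local but not global minimum, pick $z_1$ with $h(z_1)<h(z_2)$, apply the $w$-preinvexity inequality to get $h\bigl(w(z_2)+\delta\,\eta(z_1,w(z_2))\bigr)<h(z_2)$ for $\delta\in(0,1]$, and then try to contradict local minimality by pushing $\delta\to 0^{+}$. The obstacle you flag in your last paragraph is genuine, and it is worth noting that the paper's own proof does not resolve it: the paper simply asserts that there exists $\epsilon>0$ with $w(z_2)+\delta\,\eta(z_1,w(z_2))\in X\cap N$ for all $\delta\in[0,\epsilon)$, where $N$ is the neighbourhood of $z_2$ witnessing local minimality. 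Since the curve $\delta\mapsto w(z_2)+\delta\,\eta(z_1,w(z_2))$ tends to $w(z_2)$ as $\delta\to 0^{+}$, this assertion is only justified when $w(z_2)\in N$ (in particular when $w(z_2)=z_2$, i.e.\ when $w$ is the identity and the statement reduces to the classical preinvex case). Your diagnosis is therefore correct, and your proposed remedies (restricting to local minima that are fixed points of $w$, or imposing a compatibility condition in the spirit of Condition C forcing the curve to approach $\bar z$) are exactly the kind of hypothesis the theorem needs; as stated, neither your argument nor the paper's closes without one. One minor additional point in the paper's favor of your version: the paper writes $h(z_1)\le h(z_2)$ for the comparison point, whereas the strict inequality $h(z_1)<h(z_2)$ (which you use) is what is actually needed to obtain the strict inequality in the displayed chain.
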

\begin{proof}Assuming that $z_2 \in X$ is not a global minimum point, let $z_2$ be the local minimum point for $h$. Then, $\exists$ $z_1 \in X$ such that, $ h(z_1)\leq h(z_2)$. Since $h$ is $w$-preinvex $X$, $\exists$ $\eta$ and $w$ such that, $\forall$ $\delta \in [0, 1]$, we have 
	 \begin{eqnarray}
		h( w(z_2)+ \delta \eta(z_1,  w(z_2)))&\leq& \delta h(z_1)+(1-\delta)h(z_2)\notag\\
		&<&\delta h(z_2)+(1-\delta)h(z_2) \notag\\
		&=&h(z_2).
	\end{eqnarray}
Given that  $z_2$ represents a global minimum then $\exists$ a neighbourhood $N$ of $z_2$ such that \\$ h(z)\geq h(z_2)$, $\forall$ $z \in N\cap X$. As $ w(z_2)+ \delta \eta(z_1,  w(z_2)) \in X$ for each $\delta \in [0, 1]$, $\exists$ $\epsilon >0$ such that $ w(z_2)+ \delta \eta(z_1,  w(z_2)) \in X\cap N$ for each $ \delta \in [0, \epsilon)$, in a contradiction with 3.1.
\end{proof}
\begin{theorem}
	Let $ h : X \rightarrow \mathbb{R}$ is a $w$-preinvex function on $X$, where $X$ is any non-empty $w$-invex subset  of $R^n$. Any strict local minimum point is thus a strict global minimum point. 
\end{theorem}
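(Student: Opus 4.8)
The plan is to argue by contradiction, running the same mechanism as in the preceding theorem but now keeping track of strictness throughout. Suppose $z_2\in X$ is a strict local minimum of $h$ yet fails to be a strict global minimum; then there exists $z_1\in X$ with $z_1\neq z_2$ and $h(z_1)\le h(z_2)$. Since $X$ is $w$-invex, the arc $p_\delta:=w(z_2)+\delta\,\eta(z_1,w(z_2))$ stays in $X$ for every $\delta\in[0,1]$, and $w$-preinvexity of $h$ yields
$$ h(p_\delta)\ \le\ \delta h(z_1)+(1-\delta)h(z_2)\ \le\ h(z_2),\qquad \delta\in[0,1]. $$
So along the whole arc $h$ never exceeds the value $h(z_2)$, which is the inequality to play off against strict local minimality.

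Next I would translate strict local minimality of $z_2$ into the neighbourhood statement: there is a neighbourhood $N$ of $z_2$ with $h(z)>h(z_2)$ for every $z\in(N\cap X)\setminus\{z_2\}$. Exactly as in the proof of the previous theorem, for all sufficiently small $\delta$ --- say $\delta\in[0,\epsilon)$ --- the point $p_\delta$ lies in $N\cap X$; comparing $h(p_\delta)\le h(z_2)$ with the strict inequality on $(N\cap X)\setminus\{z_2\}$ forces $p_\delta=z_2$ throughout $[0,\epsilon)$, hence $w(z_2)=z_2$ and $\eta(z_1,w(z_2))=0$. Feeding this back into the displayed estimate (let $\delta\to1$) gives $h(z_2)=h(w(z_2))\le h(z_1)\le h(z_2)$, so $h(z_1)=h(z_2)$, and it remains to rule out this residual configuration. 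A tidier packaging bootstraps from the previous theorem: a strict local minimum is in particular a local minimum, so $z_2$ is already a global minimizer with value $\nu:=h(z_2)$, and one only has to prove strictness --- if $h(z_1)=\nu$ for some $z_1\neq z_2$, then $h(p_\delta)\le\nu$ together with $h(p_\delta)\ge\nu$ forces $h\equiv\nu$ along a feasible segment meeting $N$, contradicting strict local minimality at $z_2$.

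The step I expect to be the real obstacle is precisely the one already glossed over in the preceding proof: one must know that the arc $\delta\mapsto p_\delta$ actually enters every neighbourhood of $z_2$ as $\delta\to0^+$ \emph{and} that $p_\delta\neq z_2$ for arbitrarily small $\delta>0$. When $w$ is the identity both are immediate, since then $p_0=z_2$ and the arc is continuous; but in the genuinely $w$-twisted setting $w(z_2)$ need not equal $z_2$, and the degenerate case $\eta(z_1,w(z_2))=0$ is not merely pedantic --- taking $\eta\equiv0$ and $w$ constant, for instance, the $w$-preinvexity inequality only records that $h$ attains its infimum on the image of $w$, which does not by itself preclude a second, isolated global minimizer. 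Consequently the argument genuinely needs a standing continuity/compatibility hypothesis forcing $p_\delta\to z_2$ as $\delta\to0^+$ (the same one implicitly invoked in the previous theorem); with such a hypothesis in force, the residual case above collapses and the chain closes to give the asserted strict global minimality.
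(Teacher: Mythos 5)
Your proposal follows essentially the same route as the paper: the paper's proof of this theorem is literally ``same lines as Theorem 3.13,'' i.e.\ the contradiction argument via the $w$-preinvexity inequality $h(p_\delta)\le\delta h(z_1)+(1-\delta)h(z_2)\le h(z_2)$ played against the neighbourhood characterisation of (strict) local minimality, which is exactly your first two paragraphs. Your closing observation is not a defect of your write-up but a genuine gap in the paper's own argument: nothing in the definitions forces $p_\delta=w(z_2)+\delta\eta(z_1,w(z_2))$ to lie in a prescribed neighbourhood of $z_2$ for small $\delta$ (this needs $w(z_2)=z_2$, or some continuity/compatibility assumption), nor to differ from $z_2$ there (this needs $\eta(z_1,w(z_2))\neq0$). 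The authors themselves implicitly concede the second point by adding the hypothesis $\eta(z_1,w(z_2))\neq0$ whenever $z_1\neq w(z_2)$ in Theorem 3.22, the analogous statement for $w$-pre quasi-invex functions; under such a standing hypothesis your argument closes correctly, including the residual case $h(z_1)=h(z_2)$, since strict local minimality still gives $h(p_\delta)>h(z_2)$ for small $\delta>0$ while preinvexity gives $h(p_\delta)\le h(z_2)$.
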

\begin{proof}
The proof is same lines as of Theorem 3.13.	
\end{proof}

\begin{theorem}
	If $ h : X \rightarrow \mathbb{R}$ is $w$-preinvex on the $w$-invex set $X$,  where $X$ is any non-empty $w$-invex subset  of $R^n$, then also $kh$ is $w$-preinvex w.r.to $\eta$ and $w$, for any $k>0$.
\end{theorem}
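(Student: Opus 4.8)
The plan is to argue directly from the definition of $w$-preinvexity, exploiting the fact that multiplication by a positive scalar preserves the direction of an inequality. First I would fix arbitrary points $z_1, z_2 \in X$ and a parameter $\delta \in [0,1]$, and recall that since $h$ is $w$-preinvex with respect to $\eta$ and $w$ we have
\[
h\bigl( w(z_2)+ \delta \eta(z_1, w(z_2))\bigr) \leq \delta h(z_1) + (1-\delta) h(z_2).
\]
Here I would note that the point $w(z_2) + \delta\eta(z_1,w(z_2))$ lies in $X$ because $X$ is $w$-invex with respect to the same $\eta$ and $w$, so $(kh)$ is genuinely evaluated at a point of its domain.

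Next I would multiply both sides of the displayed inequality by $k$. Since $k > 0$, the inequality is preserved, yielding
\[
k\,h\bigl( w(z_2)+ \delta \eta(z_1, w(z_2))\bigr) \leq \delta\,\bigl(k h(z_1)\bigr) + (1-\delta)\,\bigl(k h(z_2)\bigr),
\]
which, writing $g = kh$, reads $g\bigl(w(z_2)+\delta\eta(z_1,w(z_2))\bigr) \leq \delta g(z_1) + (1-\delta) g(z_2)$. As $z_1, z_2 \in X$ and $\delta \in [0,1]$ were arbitrary, this is precisely the statement that $kh$ is $w$-preinvex with respect to $\eta$ and $w$.

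I do not expect any real obstacle here; the only point that must be stated carefully is that the hypothesis $k > 0$ is exactly what is needed to keep the inequality from reversing (for $k < 0$ one would instead obtain a $w$-preincave-type inequality, and for $k = 0$ the claim degenerates). One could optionally remark that the same one-line scaling argument shows $kh$ is $w$-prequasi-invex, $w$-strictly preinvex, etc., whenever $h$ is and $k > 0$, since $\max\{k h(z_1), k h(z_2)\} = k\max\{h(z_1), h(z_2)\}$ for $k > 0$, but that is not required for the stated theorem.
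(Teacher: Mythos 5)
Your proof is correct and follows essentially the same route as the paper: invoke the defining inequality of $w$-preinvexity and multiply through by the positive scalar $k$. The only difference is cosmetic — you additionally spell out that the evaluation point lies in $X$ and comment on the $k\le 0$ cases, which the paper omits but does not need.
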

\begin{proof}
	Since $h$ is a $w$-preinvex function then, we have $\forall$ $z_1, z_2 \in X$ and  $\delta \in [0, 1]$ $$	h( w(z_2)+ \delta \eta(z_1,  w(z_2)))\leq \delta h(z_1)+(1-\delta)h(z_2),$$ we have, for any $k>0$, $$	k(h( w(z_2)+ \delta \eta(z_1,  w(z_2))))\leq k(\delta h(z_1)+(1-\delta)h(z_2)), ~~\forall \delta \in [0, 1]$$ from which  $$	k(h( w(z_2)+ \delta \eta(z_1,  w(z_2))))\leq \delta (kh)(z_1)+(1-\delta)(kh)(z_2), ~~ \forall \delta \in [0, 1].$$	
\end{proof}
\begin{theorem}
Let $ h_1, h_2 : X \rightarrow \mathbb{R}$ be two $w$-preinvex functions w.r.to the same functions $\eta$ and $w$,  where $X$ is any non-empty $w$-invex subset  of $R^n$. Then, $h_1+h_2$ is a $w$-preinvex function w.r.to same $\eta$ and $w$.
\end{theorem}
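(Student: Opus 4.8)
The plan is to argue directly from the definition of $w$-preinvexity, adding the two defining inequalities term by term. First I would fix arbitrary points $z_1, z_2 \in X$ and a scalar $\delta \in [0,1]$. Since $X$ is $w$-invex w.r.to $\eta$ and $w$, the point $w(z_2)+\delta\eta(z_1,w(z_2))$ lies in $X$, so $(h_1+h_2)$ is genuinely evaluated at a point of its domain; this is the only place the hypothesis ``$X$ is $w$-invex'' is used.

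Next, because $h_1$ is $w$-preinvex w.r.to $\eta$ and $w$, we have
$$h_1\bigl(w(z_2)+\delta\eta(z_1,w(z_2))\bigr)\leq \delta h_1(z_1)+(1-\delta)h_1(z_2),$$
and since $h_2$ is $w$-preinvex w.r.to the \emph{same} $\eta$ and $w$ (this is the crucial shared-maps hypothesis, without which the two inequalities would be about evaluations at different points and could not be combined),
$$h_2\bigl(w(z_2)+\delta\eta(z_1,w(z_2))\bigr)\leq \delta h_2(z_1)+(1-\delta)h_2(z_2).$$
Adding these two inequalities and using that $(h_1+h_2)(z)=h_1(z)+h_2(z)$ pointwise gives
$$(h_1+h_2)\bigl(w(z_2)+\delta\eta(z_1,w(z_2))\bigr)\leq \delta\bigl(h_1(z_1)+h_2(z_1)\bigr)+(1-\delta)\bigl(h_1(z_2)+h_2(z_2)\bigr),$$
which is exactly $\delta(h_1+h_2)(z_1)+(1-\delta)(h_1+h_2)(z_2)$. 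Since $z_1, z_2, \delta$ were arbitrary, $h_1+h_2$ is $w$-preinvex w.r.to $\eta$ and $w$.

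There is no real obstacle here; the argument is entirely mechanical. The only point requiring any care — more a matter of hypothesis-checking than difficulty — is to note explicitly that both functions must be $w$-preinvex with respect to the \emph{same} pair $(\eta, w)$ so that the two inequalities refer to the function value at the identical argument $w(z_2)+\delta\eta(z_1,w(z_2))$; this is what makes the summation of the left-hand sides legitimate. (One could also remark, as a minor generalization, that the same proof shows any finite sum, or more generally any nonnegative linear combination of $w$-preinvex functions sharing $\eta$ and $w$ is again $w$-preinvex, combining this with the preceding scaling theorem.)
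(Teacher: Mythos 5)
Your proof is correct and follows essentially the same route as the paper's: apply the defining inequality of $w$-preinvexity to $h_1$ and $h_2$ at the common point $w(z_2)+\delta\eta(z_1,w(z_2))$ and add the two inequalities. Your explicit remarks on why the shared pair $(\eta,w)$ is needed are a welcome clarification but do not change the argument.
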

\begin{proof}
	Because $h_1$ and $h_2$ are $w$-preinvex w.r.to $\eta$ and $w$, we have $$	h_1( w(z_2)+ \delta \eta(z_1,  w(z_2)))\leq \delta h_1(z_1)+(1-\delta)h_1(z_2).$$ And $$	h_2( w(z_2)+ \delta \eta(z_1,  w(z_2)))\leq \delta h_2(z_1)+(1-\delta)h_2(z_2).$$
By combining the aforementioned relationships, we get
	$$(	h_1+h_2)( w(z_2)+ \delta \eta(z_1,  w(z_2)))\leq \delta (h_1+h_2)(z_1)+(1-\delta)(h_1+h_2)(z_2).$$
\end{proof}
Theorems 3.15 and 3.16 have a direct impact on the following.
\begin{theorem}
	Let $h_i : X \rightarrow \mathbb{R}$, $i=1,2,...,m,$ be $w$-preinvex w.r.to same $\eta$ and $w$,  where $X$ is any non-empty $w$-invex subset  of $R^n$. Then $\sum_{i=1}^{m} k_ih_i(z)$ is $w$-preinvex w.r.to same $\eta$ and $w$, where $k_i>0$, $i=1,2,...,m.$
\end{theorem}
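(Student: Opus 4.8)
The plan is to prove Theorem 3.17 by straightforward induction on $m$, relying on Theorems 3.15 and 3.16 as the engine. The statement asserts that a positive linear combination $\sum_{i=1}^{m} k_i h_i$ of $w$-preinvex functions (all with respect to the same $\eta$ and $w$) is again $w$-preinvex with respect to the same $\eta$ and $w$, provided each $k_i > 0$.

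First I would establish the base case. For $m = 1$ the claim is simply that $k_1 h_1$ is $w$-preinvex for $k_1 > 0$, which is exactly Theorem 3.15. (One could also take $m = 2$ as a slightly richer base case: $k_1 h_1$ and $k_2 h_2$ are each $w$-preinvex by Theorem 3.15, and their sum is $w$-preinvex by Theorem 3.16.) Then for the inductive step I would assume the result holds for some $m \geq 1$, i.e.\ that $g := \sum_{i=1}^{m} k_i h_i$ is $w$-preinvex with respect to $\eta$ and $w$. Given $h_1, \dots, h_{m+1}$ all $w$-preinvex with respect to the common $\eta$ and $w$, and positive scalars $k_1, \dots, k_{m+1}$, I would write
$$\sum_{i=1}^{m+1} k_i h_i = g + k_{m+1} h_{m+1}.$$
By the induction hypothesis $g$ is $w$-preinvex with respect to $\eta$ and $w$; by Theorem 3.15, $k_{m+1} h_{m+1}$ is $w$-preinvex with respect to $\eta$ and $w$ since $k_{m+1} > 0$; and by Theorem 3.16 the sum of these two $w$-preinvex functions (with the same $\eta$ and $w$) is $w$-preinvex with respect to $\eta$ and $w$. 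This closes the induction and proves the theorem.

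I do not anticipate a genuine obstacle here, since the theorem is essentially a packaging of the two preceding results. The only point requiring a little care is bookkeeping the hypothesis that all functions share the same $\eta$ and $w$ — this is what makes Theorem 3.16 applicable at the inductive step, and it is preserved because Theorem 3.15 does not alter $\eta$ or $w$. If one preferred to avoid induction entirely, an alternative is the direct argument: for each $i$, $w$-preinvexity of $h_i$ gives
$$h_i\bigl(w(z_2) + \delta\,\eta(z_1, w(z_2))\bigr) \leq \delta\, h_i(z_1) + (1-\delta)\, h_i(z_2),$$
multiplying by $k_i > 0$ and summing over $i = 1, \dots, m$ yields
$$\sum_{i=1}^{m} k_i h_i\bigl(w(z_2) + \delta\,\eta(z_1, w(z_2))\bigr) \leq \delta \sum_{i=1}^{m} k_i h_i(z_1) + (1-\delta)\sum_{i=1}^{m} k_i h_i(z_2),$$
which is exactly the $w$-preinvexity inequality for $\sum_{i=1}^{m} k_i h_i$ with respect to $\eta$ and $w$. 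I would present the inductive version as the main proof, since the paper explicitly flags that ``Theorems 3.15 and 3.16 have a direct impact on the following,'' but I would keep the direct summation argument in mind as a self-contained cross-check.
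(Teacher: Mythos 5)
Your proposal is correct and follows exactly the route the paper intends: the paper gives no explicit proof of this theorem, merely noting that it is a direct consequence of Theorems 3.15 and 3.16, and your induction (base case from 3.15, inductive step combining 3.15 and 3.16) is precisely the standard way to make that remark rigorous. The direct summation argument you include as a cross-check is also valid and self-contained.
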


\begin{theorem}
	Let $h: X \rightarrow \mathbb{R}$ be a $w$-preinvex w.r.to $\eta$ and $w$,  where $X$ is any non-empty $w$-invex subset  of $R^n$, let $\phi : \mathbb{R} \rightarrow \mathbb{R}$ be an increasing and convex. Then $\phi \circ h$ is $w$-preinvex w.r.to $\eta$ and $w$.
\end{theorem}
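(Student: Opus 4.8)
The plan is to verify the defining inequality for $w$-preinvexity of the composite map $\phi\circ h$ directly from the two hypotheses: that $h$ satisfies the $w$-preinvex inequality with respect to $\eta$ and $w$, and that $\phi$ is increasing and convex on $\mathbb{R}$. Fix arbitrary points $z_1,z_2\in X$ and a parameter $\delta\in[0,1]$; since $X$ is $w$-invex, the point $w(z_2)+\delta\eta(z_1,w(z_2))$ lies in $X$, so $(\phi\circ h)$ is well defined there.

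First I would apply the $w$-preinvexity of $h$ to get
\[
h\bigl(w(z_2)+\delta\eta(z_1,w(z_2))\bigr)\ \le\ \delta h(z_1)+(1-\delta)h(z_2).
\]
Next, because $\phi$ is increasing, applying $\phi$ to both sides preserves the inequality:
\[
\phi\Bigl(h\bigl(w(z_2)+\delta\eta(z_1,w(z_2))\bigr)\Bigr)\ \le\ \phi\bigl(\delta h(z_1)+(1-\delta)h(z_2)\bigr).
\]
Finally, because $\phi$ is convex and $\delta\in[0,1]$, Jensen's inequality (the two-point definition of convexity) gives
\[
\phi\bigl(\delta h(z_1)+(1-\delta)h(z_2)\bigr)\ \le\ \delta\,\phi(h(z_1))+(1-\delta)\,\phi(h(z_2)).
\]
Chaining these three steps yields
\[
(\phi\circ h)\bigl(w(z_2)+\delta\eta(z_1,w(z_2))\bigr)\ \le\ \delta\,(\phi\circ h)(z_1)+(1-\delta)\,(\phi\circ h)(z_2),
\]
which is exactly the statement that $\phi\circ h$ is $w$-preinvex with respect to the same $\eta$ and $w$.

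There is no real obstacle here; the only point requiring the slightest care is the order in which monotonicity and convexity of $\phi$ are used — monotonicity must be invoked first to transport the $h$-inequality through $\phi$, and convexity second on the already-transported right-hand side. Both properties of $\phi$ are genuinely needed: convexity alone would not let us pass $\phi$ across the inequality coming from $h$, and monotonicity alone would not let us distribute $\phi$ over the convex combination. Since all three inequalities hold for every $\delta\in[0,1]$ and every $z_1,z_2\in X$, the conclusion follows immediately, and the proof is a short three-line chain with no case analysis.
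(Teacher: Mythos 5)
Your proof is correct and follows essentially the same route as the paper's: apply the $w$-preinvex inequality for $h$, push it through $\phi$ by monotonicity, and then use convexity of $\phi$ on the resulting convex combination. Your write-up is in fact cleaner, since the paper's displayed final line contains a typo ($\phi(h(z_1))$ appearing where $\phi(h(z_2))$ is intended) that your version avoids.
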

\begin{proof}
	As $h$ is $w$-preinvex w.r.to $\eta$ and $w$, we obtain $\forall$  $z_1, z_2 \in X, \delta \in [0, 1]$,  $$	h( w(z_2)+ \delta \eta(z_1,  w(z_2)))\leq \delta h(z_1)+(1-\delta)h(z_2).$$ Due to the fact that $\phi : \mathbb{R} \rightarrow \mathbb{R}$ non-decreasing and convex, we obtain 
	 \begin{eqnarray*}
		\phi (h( w(z_2)+ \delta \eta(z_1,  w(z_2))))&\leq& \phi (\delta  (h(z_1)+(1-\delta)h(z_2))\notag\\
		&\leq& \delta \phi (h(z_1)) +(1-\delta)\phi (h(z_1)),  \notag\\
	\end{eqnarray*}
or
$$\phi \circ h( w(z_2)+ \delta \eta(z_1,  w(z_2))\leq \delta \phi \circ h(z_1) +(1-\delta)\phi\circ h(z_1)$$
\end{proof}
\begin{definition}
Let $X$ is any non-empty $w$-invex subset  of $R^n$.  $h : X \rightarrow \mathbb{R}$ is said to be $w$-pre pseudo-invex function if $\exists$ functions $\eta$, $w$ and a  function $b$, positve on domain, such that $$ h(z_1)<h(z_2) \implies h(z_2+\delta \eta(z_1, w(z_2))) \leq h(z_2)+\delta (\delta-1)b(z_1, z_2)$$ for every $\delta \in (0, 1)$ and $ z_1, z_2 \in X$.
\end{definition}
\begin{theorem}
	 If $h$ is $w$-preinvex, then $h$ is $w$-pre pseudo-invex w.r.to the same $\eta$ and $w$, where $X$ is any non-empty $w$-invex subset  of $R^n$.
\end{theorem}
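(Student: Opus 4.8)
The plan is to feed the defining inequality of $w$-preinvexity directly into the inequality required for $w$-pre pseudo-invexity and read off the correct choice of the auxiliary function $b$. Assume $h : X \to \mathbb{R}$ is $w$-preinvex with respect to $\eta$ and $w$, and fix $z_1, z_2 \in X$ with $h(z_1) < h(z_2)$ together with $\delta \in (0,1)$. Applying Definition~3.3(i) gives
$$ h\bigl(w(z_2) + \delta\,\eta(z_1, w(z_2))\bigr) \le \delta h(z_1) + (1-\delta) h(z_2) = h(z_2) - \delta\bigl(h(z_2) - h(z_1)\bigr). $$

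First I would isolate the quantity $h(z_2) - h(z_1)$, which is strictly positive precisely because $h(z_1) < h(z_2)$; this is the natural candidate for $b(z_1, z_2)$. So I would set $b(z_1, z_2) := h(z_2) - h(z_1)$ whenever $h(z_1) < h(z_2)$, and extend $b$ to all of $X \times X$ by any strictly positive value (e.g.\ $b(z_1, z_2) := 1$) on the remaining pairs, so that $b$ is positive on its domain as Definition~3.19 demands; the extension is irrelevant to the implication being verified, since that implication is vacuous unless $h(z_1) < h(z_2)$.

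Next I would check that, with this choice, the right-hand side of the display above is dominated by $h(z_2) + \delta(\delta - 1) b(z_1,z_2)$. Since $\delta \in (0,1)$ we have $0 < 1 - \delta < 1$, hence $(1-\delta)\bigl(h(z_2) - h(z_1)\bigr) \le h(z_2) - h(z_1)$; multiplying by $\delta > 0$ gives $\delta(1-\delta) b(z_1,z_2) \le \delta\bigl(h(z_2) - h(z_1)\bigr)$, and rearranging yields
$$ h(z_2) - \delta\bigl(h(z_2) - h(z_1)\bigr) \le h(z_2) - \delta(1-\delta) b(z_1,z_2) = h(z_2) + \delta(\delta-1) b(z_1,z_2). $$
Chaining this with the previous display gives $h\bigl(w(z_2) + \delta\,\eta(z_1, w(z_2))\bigr) \le h(z_2) + \delta(\delta-1) b(z_1,z_2)$, which is exactly the inequality in Definition~3.19, so $h$ is $w$-pre pseudo-invex with respect to the same $\eta$ and $w$.

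The only real subtlety — there is no serious obstacle — is the slight notational mismatch between Definition~3.3(i), which evaluates $h$ at $w(z_2) + \delta\eta(z_1, w(z_2))$, and Definition~3.19, which as printed writes $z_2 + \delta\eta(z_1, w(z_2))$; I would read the latter as a typo for $w(z_2) + \delta\eta(z_1, w(z_2))$, consistent with the rest of Section~3, so that the two definitions refer to the same point, and I would state this explicitly at the start of the proof. Everything else reduces to the elementary fact that $(1-\delta)t \le t$ for $t > 0$ and $\delta \in (0,1)$.
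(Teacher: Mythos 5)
Your proof is correct and follows essentially the same route as the paper's: both apply the $w$-preinvexity inequality, take $b(z_1,z_2)=h(z_2)-h(z_1)>0$, and use $\delta(1-\delta)\le\delta$ to reach the pseudo-invexity bound. Your added remarks on extending $b$ to pairs with $h(z_1)\ge h(z_2)$ and on the typo in Definition 3.19 are sensible housekeeping that the paper omits, but they do not change the substance of the argument.
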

\begin{proof}
	For $ h(z_1)<h(z_2)$, for every $\delta \in (0, 1),$ we can write
	 \begin{eqnarray*}
		(h( w(z_2)+ \delta \eta(z_1,  w(z_2))))&\leq&   h(z_2)+\delta [h(z_1)-h(z_2)]\notag\\
		&<&  h(z_2)+\delta [h(z_1)-h(z_2)]-\delta^2 [h(z_1)-h(z_2)],  \notag\\
			&=&  h(z_2)+\delta(\delta-1) [h(z_2)-h(z_1)],  \notag\\
	\end{eqnarray*}
where $b(z_1,z_2)=h(z_2)-h(z_1)>0.$
\end{proof}
\begin{theorem}
		Let  $ h : X \rightarrow \mathbb{R}$ is a $w$-pre pseudo-invex function,  where $X$ is any non-empty $w$-invex subset  of $R^n$. Consequently, any local minimum point is also a global minimum point.
\end{theorem}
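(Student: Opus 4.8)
The plan is to copy the contradiction argument used for Theorem 3.13, replacing the $w$-preinvex inequality by the defining inequality of $w$-pre pseudo-invexity from Definition 3.20. So I would begin by supposing that $z_2\in X$ is a local minimum point of $h$ that is not a global minimum point. Then there exists $z_1\in X$ with $h(z_1)<h(z_2)$, and this strict inequality is precisely the hypothesis that activates the implication defining $w$-pre pseudo-invexity.

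Applying that implication to the pair $(z_1,z_2)$, for every $\delta\in(0,1)$ we obtain
$$h\big(w(z_2)+\delta\,\eta(z_1,w(z_2))\big)\leq h(z_2)+\delta(\delta-1)\,b(z_1,z_2).$$
Since $0<\delta<1$ gives $\delta(\delta-1)<0$, and $b$ is positive on the domain, the term $\delta(\delta-1)b(z_1,z_2)$ is strictly negative, so
$$h\big(w(z_2)+\delta\,\eta(z_1,w(z_2))\big)<h(z_2)\qquad\text{for all }\delta\in(0,1).$$
Next I would invoke $w$-invexity of $X$ to ensure $w(z_2)+\delta\,\eta(z_1,w(z_2))\in X$ for all $\delta\in[0,1]$, and then use that $z_2$ is a local minimum: there is a neighbourhood $N$ of $z_2$ with $h(z)\geq h(z_2)$ for all $z\in N\cap X$. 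For all sufficiently small $\delta>0$ the point $w(z_2)+\delta\,\eta(z_1,w(z_2))$ lies in $N\cap X$, whence $h\big(w(z_2)+\delta\,\eta(z_1,w(z_2))\big)\geq h(z_2)$, contradicting the strict inequality above. Hence no such $z_1$ exists and $z_2$ is a global minimum point.

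The step I expect to be the main obstacle is the final one, namely the claim that $w(z_2)+\delta\,\eta(z_1,w(z_2))$ enters every neighbourhood of $z_2$ as $\delta\to0^+$. This is exactly the gap already present in the proof of Theorem 3.13: it holds when $w$ is the identity map, or more generally once one assumes that $\delta\mapsto w(z_2)+\delta\,\eta(z_1,w(z_2))$ is continuous at $\delta=0$ with limiting value $z_2$. I would make this continuity assumption explicit (it is implicit throughout Section 3); with it in hand, everything else in the argument is routine sign bookkeeping.
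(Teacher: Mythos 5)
Your argument is essentially identical to the paper's own proof: assume a local minimum $z_2$ that is not global, pick $z_1$ with $h(z_1)<h(z_2)$, apply the $w$-pre pseudo-invexity inequality and the sign of $\delta(\delta-1)b(z_1,z_2)$ to get $h\bigl(w(z_2)+\delta\,\eta(z_1,w(z_2))\bigr)<h(z_2)$ for all $\delta\in(0,1)$, and contradict local minimality. The continuity issue you flag at the end (the curve $\delta\mapsto w(z_2)+\delta\,\eta(z_1,w(z_2))$ tends to $w(z_2)$, not necessarily to $z_2$, so it need not enter a neighbourhood of $z_2$) is a real defect, but it is present in the paper's own proofs of this theorem and of Theorem 3.13 as well, so your proposal matches the paper and is in fact more explicit about the hypothesis needed to close that step.
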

\begin{proof}Let us  assume irrationally that $z_2$ be local minimum point rather than a global one.
	 Then $\exists$ $z_1 \in X$ such that $h(z_1)<h(z_2).$ According to the $w$-pre pseudo-invexity definition, which means that, we have $\exists$ $ b(z_1, z_2)>0$, such that, $\forall$ $\delta \in (0, 1)$,  $$  h(z_2+\delta \eta(z_1, w(z_2))) \leq h(z_2)+\delta (\delta-1)b(z_1, z_2)<h(z_2),$$ from which $h(z_2)> h(w(z_2)+\delta \eta (z_1, w(z_2)))$, ~~$\forall$ $\in (0, 1),$\\ in contradiction with the assumption.
\end{proof}
\begin{theorem}
 Assume that $\eta(z_1, w(z_2))\neq 0$ whenever $z_1\neq w(z_2)$. Let $h$ be a $w$-pre quasi-invex function on $X$ w.r.to $\eta$ and $w$,  where $X$ is any non-empty $w$-invex subset  of $R^n$. Consequently, any strict local minimum point is $h$ also a strict global minimum point.
\end{theorem}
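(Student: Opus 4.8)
The plan is to argue by contradiction, in the spirit of Theorems 3.13 and 3.14, but using only the $\max$-type inequality that $w$-pre quasi-invexity supplies. Suppose $z_2\in X$ is a strict local minimum of $h$ on $X$ but fails to be a strict global minimum. Then there is a point $z_1\in X$ with $z_1\neq z_2$ and $h(z_1)\leq h(z_2)$: indeed, if no such $z_1$ existed, every $z\in X\setminus\{z_2\}$ would satisfy $h(z)>h(z_2)$, making $z_2$ a strict global minimizer already.

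Next I would invoke $w$-pre quasi-invexity of $h$ with respect to $\eta$ and $w$ for this pair: for every $\delta\in[0,1]$,
$$ h\big(w(z_2)+\delta\,\eta(z_1,w(z_2))\big)\;\leq\;\max\{h(z_1),\,h(z_2)\}\;=\;h(z_2), $$
and, $X$ being $w$-invex, each point $p(\delta):=w(z_2)+\delta\,\eta(z_1,w(z_2))$ lies in $X$. I would then use the standing hypothesis: since $z_1\neq w(z_2)$ (the value $w(z_2)$ playing the role of the second base point), $\eta(z_1,w(z_2))\neq 0$, so $p(\delta)\neq p(0)$ for all $\delta\in(0,1]$ while $p(\delta)\to p(0)$ as $\delta\to 0^{+}$. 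Hence, for all sufficiently small $\delta>0$, the points $p(\delta)$ are distinct from the base point, lie in $X$, and carry $h$-value at most $h(z_2)$.

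The final step is to extract the contradiction: picking $\epsilon>0$ small enough that $p(\delta)$ falls inside the neighbourhood $N$ of $z_2$ on which the strict local-minimum property holds, we would obtain points $p(\delta)\in X\cap N$, not equal to $z_2$, with $h(p(\delta))\leq h(z_2)$, contradicting $h(z)>h(z_2)$ for all $z\in(N\cap X)\setminus\{z_2\}$. The step I expect to require the most care is exactly this passage into a small neighbourhood of $z_2$: it needs $p(0)=w(z_2)$ to lie in every neighbourhood of $z_2$, i.e. effectively the normalization $w(z_2)=z_2$ at the minimizer (the $w$-analogue of the usual $\eta(z,z)=0$ condition), which is used implicitly in the proofs of Theorems 3.13 and 3.14. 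I would therefore record that normalization explicitly before running the limiting argument; once it is in place, the hypothesis $\eta(z_1,w(z_2))\neq 0$ does the remaining work of ensuring the feasible path $\delta\mapsto p(\delta)$ is non-constant, and the contradiction closes, forcing the strict local minimum to be a strict global minimum.
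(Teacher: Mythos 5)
Your proof is correct and follows essentially the same contradiction argument as the paper: produce $z_1\neq z_2$ with $h(z_1)\leq h(z_2)$, apply the quasi-invexity inequality to get $h\big(w(z_2)+\delta\,\eta(z_1,w(z_2))\big)\leq h(z_2)$, and push $\delta\to 0^{+}$ into the neighbourhood where strict local minimality holds. You are in fact more careful than the paper, whose one-line proof writes $z_2+\delta\,\eta(z,w(z_2))$ in place of $w(z_2)+\delta\,\eta(z_1,w(z_2))$ and thereby hides exactly the normalization $w(z_2)=z_2$ that you correctly flag as necessary both for the limiting step and for deducing $z_1\neq w(z_2)$ (hence $\eta(z_1,w(z_2))\neq 0$) from $z_1\neq z_2$.
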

\begin{proof} Contrarily suppose that $z_2$ be strict local minimum but  not global; then $\exists$  $z \in X$ s.t., $h(z) < h(z_2).$ Since $h$ is $w$-pre quasi-invex, we have  $$  h(z_2+\delta \eta(z, w(z_2))) \leq h(z_2),$$ a contradiction.
\end{proof}
\begin{theorem}
	Let  $h : X \rightarrow \mathbb{R}$ be a $w$-pre quasi-invex function w.r.to $\eta$,  where $X$ is any non-empty $w$-invex subset  of $R^n$ and $w$. Suppose that $ \phi : \mathbb{R} \rightarrow \mathbb{R}$ is a increasing function, then, $\phi \circ h$ is $w$-pre quasi-invex w.r.to $\eta$ and $w$.
\end{theorem}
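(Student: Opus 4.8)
The plan is to unwind both definitions and simply push the defining inequality of $h$ through $\phi$. First I would fix arbitrary $z_1,z_2\in X$ and $\delta\in[0,1]$, and set $u:=w(z_2)+\delta\,\eta(z_1,w(z_2))$. Since $X$ is $w$-invex with respect to $\eta$ and $w$, we have $u\in X$, so $(\phi\circ h)(u)$ is well defined; this is the only place the $w$-invexity of $X$ is needed. Then, since $h$ is $w$-pre quasi-invex w.r.to $\eta$ and $w$, we get $h(u)\le\max\{h(z_1),h(z_2)\}$.

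Next I would apply $\phi$ to both sides. Because $\phi$ is increasing (here read as non-decreasing, so that it preserves $\le$), this yields $\phi(h(u))\le\phi\big(\max\{h(z_1),h(z_2)\}\big)$. The one elementary fact to invoke is the identity $\phi(\max\{a,b\})=\max\{\phi(a),\phi(b)\}$, valid for any monotone non-decreasing $\phi$; it follows at once by splitting into the cases $a\ge b$ and $a\le b$. Chaining these two inequalities gives
$$(\phi\circ h)(u)=\phi(h(u))\le\max\{\phi(h(z_1)),\phi(h(z_2))\}=\max\{(\phi\circ h)(z_1),(\phi\circ h)(z_2)\},$$
which is exactly the statement that $\phi\circ h$ is $w$-pre quasi-invex w.r.to the same $\eta$ and $w$.

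I do not expect a genuine obstacle in this argument: unlike the analogous result for $w$-preinvex functions (Theorem 3.19), where convexity of $\phi$ is genuinely used, the quasi-invex case needs only monotonicity of $\phi$, entering through the max-preservation identity above. The only point worth a remark is the reading of ``increasing'': if $\phi$ is merely strictly increasing the same proof applies verbatim, and if one wants the conclusion for a strictly quasi-invex variant one would additionally use that strict monotonicity preserves strict inequalities.
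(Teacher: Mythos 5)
Your proof is correct and follows essentially the same route as the paper's: apply the $w$-pre quasi-invexity inequality for $h$, push it through the monotone $\phi$, and use that $\phi$ commutes with $\max$. You spell out the well-definedness via $w$-invexity of $X$ and the case-split behind $\phi(\max\{a,b\})=\max\{\phi(a),\phi(b)\}$, which the paper leaves implicit, but the argument is the same.
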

\begin{proof}
	Given that $h$ is $w$-pre quasi-invex function and $\phi$ is a increasing, we obtain 	$$\phi \circ h( w(z_2)+ \delta \eta(z_1,  w(z_2)))\leq \phi(\mbox{max} \{ h(z_1), h(z_2)\}).$$ 
	 \begin{eqnarray*}
		\phi \circ h( w(z_2)+ \delta \eta(z_1,  w(z_2)))&\leq& \phi(\mbox{max} \{ h(z_1), h(z_2)\})\notag\\
		&=& \mbox{max} \{\phi \circ h(z_1),\phi \circ h(z_2)\},
	\end{eqnarray*}
 which expresses the fact that the composition function $\phi \circ h$ is $w$-pre quasi-invex.

\end{proof}
\section{Optimality}
Let $ h : \mathbb{R}^n \rightarrow \mathbb{R}$ and $ g_i:\mathbb{R}^n \rightarrow \mathbb{R}$, $i=1,2,....,m $ are $w$-preinvex functions on $\mathbb{R}^n$. A $w$-preinvex programming problem is formulated as follows:

\begin{equation}
	\begin{aligned}
		& {\text{Min~h(z)}}\\
		& \text{subject to}\\
		& \text z \in X=\{z\in \mathbb{R}^n : g_i(z)\leq 0, \; i = 1, \ldots, m\}.
	\end{aligned}
\end{equation}
\begin{theorem}
	If $ h : \mathbb{R}^n \rightarrow \mathbb{R}$ is $w$-strictly preinvex function on $X$,  where $X$ is any non-empty $w$-invex subset  of $R^n$, then the global optimal solution of problem (4.1) is unique.
\end{theorem}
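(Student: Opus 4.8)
The plan is to argue by contradiction, exactly in the spirit of the second half of Theorem 3.12. Suppose problem (4.1) has two \emph{distinct} global optimal solutions $z_1, z_2 \in X$, $z_1 \neq z_2$, and set $\nu = h(z_1) = h(z_2) = \inf_{z \in X} h(z)$. The goal is to produce a feasible point at which $h$ takes a value strictly below $\nu$, which is absurd.

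First I would check that the feasible set $X$ is itself $w$-invex with respect to the same $\eta$ and $w$ for which $h$ is $w$-strictly preinvex. Since each constraint function $g_i$ is $w$-preinvex w.r.to $\eta$ and $w$, Theorem 3.10 shows that every set $\{z \in \mathbb{R}^n : g_i(z) \le 0\}$ is $w$-invex w.r.to $\eta$ and $w$; and a finite intersection of sets that are $w$-invex with the \emph{same} pair $(\eta,w)$ is again $w$-invex with that pair, because if $z_1,z_2$ lie in each set then $w(z_2)+\delta\eta(z_1,w(z_2))$ does too. Hence $w(z_2)+\delta\eta(z_1,w(z_2)) \in X$ for every $\delta \in [0,1]$, i.e.\ this point is feasible.

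Next I would substitute $z_1 \neq z_2$ into the definition of $w$-strict preinvexity. Fixing, say, $\delta = \tfrac12 \in (0,1)$, we get
$$ h\bigl(w(z_2)+\delta\,\eta(z_1,w(z_2))\bigr) < \delta\,h(z_1) + (1-\delta)\,h(z_2) = \delta\nu + (1-\delta)\nu = \nu. $$
Thus $w(z_2)+\delta\eta(z_1,w(z_2))$ is a feasible point with objective value strictly smaller than $\nu = \inf_{z\in X} h(z)$, a contradiction. Therefore no two distinct global optimal solutions exist, and the global optimal solution of (4.1) is unique.

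This argument has essentially no genuinely hard step; the only point requiring a little care is the justification that the constraint set $X$ inherits $w$-invexity with respect to $\eta$ and $w$ (so that the "convex-combination-type" point $w(z_2)+\delta\eta(z_1,w(z_2))$ remains feasible). That is handled by Theorem 3.10 together with closure of $w$-invexity under intersection for a common $(\eta,w)$; after that the conclusion is an immediate substitution into the strict inequality.
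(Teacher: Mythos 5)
Your proposal is correct and takes essentially the same route as the paper: assume two distinct global optima of equal value, apply strict $w$-preinvexity to obtain a feasible point with strictly smaller objective value, and conclude by contradiction. Your additional verification that $w(z_2)+\delta\eta(z_1,w(z_2))$ remains feasible (via Theorem 3.10 and intersection of $w$-invex sets for a common $(\eta,w)$) is a detail the paper leaves implicit, since its hypothesis already assumes $X$ is $w$-invex.
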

\begin{proof}
	Let $z_1,z_2 \in X$ be different global optimal solution of problem (4.1). Then $h(z_1)=h(z_2)$. Since $h$ is $w$-strictly invex then, we have 
	\begin{eqnarray*}
	 h( w(z_2)+ \delta \eta(z_1,  w(z_2)))&<& \delta h(z_1)+(1-\delta)h(z_2)\notag\\
		&=& h(z_1),
	\end{eqnarray*}
$$	h( w(z_2)+ \delta \eta(z_1,  w(z_2)))<  h(z_1), ~~for~~ each~~ \delta \in (0, 1).$$ This contradicts the optimality of $z_2$ for problem(4.1). Hence, the global optimal solution of problem (4.1) is unique.
\end{proof}
\begin{theorem}
	Let $ h : \mathbb{R}^n \rightarrow \mathbb{R}$ be $w$-quasi preinvex function on $X$,  where $X$ is any non-empty $w$-invex subset  of $R^n$ and let $\gamma=\min_{z\in X} h(z)$. The set $X=\{z\in X : h(z)=\gamma\}$ of optimal solution of problem (4.1) is $w$-invex set.\\
	If $h$ is $w$-strictly quasi-invex, then the set $X$ is a singleton.
\end{theorem}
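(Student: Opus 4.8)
The plan is to follow the pattern of Theorem~3.12, replacing the affine bound $\delta h(z_1)+(1-\delta)h(z_2)$ by the bound $\max\{h(z_1),h(z_2)\}$ supplied by $w$-quasi preinvexity, and to re-use the contradiction from Theorem~4.1 for the strictly-quasi part. To sidestep the notational clash in the statement (the optimal-solution set is also written $X$ there), I will denote the optimal set by $F=\{z\in X: h(z)=\gamma\}$, keeping $X$ for the feasible set of~(4.1).

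First I would show that $F$ is $w$-invex. Fix $z_1,z_2\in F$ and $\delta\in[0,1]$. Since the feasible set $X$ is $w$-invex, the point $u:=w(z_2)+\delta\,\eta(z_1,w(z_2))$ lies in $X$. Because $h$ is $w$-quasi preinvex with respect to $\eta$ and $w$,
\[
h\bigl(w(z_2)+\delta\,\eta(z_1,w(z_2))\bigr)\le \max\{h(z_1),h(z_2)\}=\gamma .
\]
On the other hand $u\in X$ and $\gamma=\min_{z\in X}h(z)$, hence $h(u)\ge\gamma$. Combining the two inequalities gives $h(u)=\gamma$, i.e. $u\in F$; this is exactly the assertion that $F$ is $w$-invex with respect to the same $\eta$ and $w$.

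For the second part I would argue by contradiction, mirroring Theorem~4.1. Assume $h$ is $w$-strictly quasi-invex and suppose $F$ contains two distinct points $z_1\ne z_2$; then necessarily $h(z_1)=h(z_2)=\gamma$. For any $\delta\in(0,1)$ the definition of $w$-strict prequasi-invexity yields
\[
h\bigl(w(z_2)+\delta\,\eta(z_1,w(z_2))\bigr)<\max\{h(z_1),h(z_2)\}=\gamma ,
\]
while $w$-invexity of $X$ makes $w(z_2)+\delta\,\eta(z_1,w(z_2))$ feasible, so its $h$-value cannot drop below the minimum $\gamma$ --- a contradiction. Thus $F$ has at most one element; since $\gamma$ is the minimum over $X$ it is attained, so $F$ is a singleton.

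I do not anticipate a genuine obstacle; the delicate points are purely bookkeeping. One must keep the optimal set $F$ separate from the feasible set $X$; one must invoke the strict inequality with the open interval $\delta\in(0,1)$ and with $z_1\ne z_2$, which is precisely the range in which the definition of $w$-strict prequasi-invexity applies; and one should note that the lower bound $h(u)\ge\gamma$ used in both parts is not a property of $h$ but simply the feasibility of $u$, which itself comes from the $w$-invexity of $X$.
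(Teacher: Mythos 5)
Your proposal is correct and follows essentially the same route as the paper's proof: use $w$-quasi preinvexity to bound $h$ at the displaced point by $\max\{h(z_1),h(z_2)\}=\gamma$ for the invexity of the optimal set, and the strict inequality for $\delta\in(0,1)$, $z_1\neq z_2$ to contradict minimality in the singleton part. In fact your write-up is slightly more careful than the paper's: you make explicit the lower bound $h(u)\ge\gamma$ coming from feasibility (which the paper leaves implicit) and you correctly use the strict inequality $<$ in the second part, where the paper's printed proof mistakenly writes $\leq$, which as written yields no contradiction.
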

\begin{proof}
		Let $z_1,z_2 \in X$ be different global optimal solution of problem (4.1). Then $$h(z_1)=h(z_2)=\gamma.$$ Since $ h : \mathbb{R}^n \rightarrow \mathbb{R}$ be $w$-quasi preinvex function on $X$, then 	$$ h( w(z_2)+ \delta \eta(z_1,  w(z_2)))\leq \mbox{max} \{ h(z_1), h(z_2)\}= \gamma$$ which implies that $h( w(z_2)+ \delta \eta(z_1,  w(z_2)) \in X,$ so $X$ is $w$-invex set.\\
		For the other part, on contrary let $z_1,z_2 \in X$, $z_1\neq z_2, ~~\delta \in (0, 1),$ then $h( w(z_2)+ \delta \eta(z_1,  w(z_2)) \in X.$\\Further, since $h$ is $w$-strictly quasi preinvexity function on $X$, we have 	$$ h( w(z_2)+ \delta \eta(z_1,  w(z_2)))\leq \mbox{max} \{ h(z_1), h(z_2)\}= \gamma.$$ This contradicts that $\gamma=\min_{z\in X} h(z)$ and hence the results follows.
\end{proof}
\begin{theorem}
	Consider the function $ h : \mathbb{R}^n \rightarrow \mathbb{R}$ is $w$-pre quasi-preinvex on a $w$-invex subset of $\mathbb{R}^n$. Suppose $\gamma=\min_{z \in X} h(z)$. Then the optimal solutions of problem (4.1) i.e., $Y=\{z \in X\ : h(z)=\gamma\}$ is a $w$-invex set. If $h$ is $w$-strictly quasi-preinvex on a $w$-invex subset of $\mathbb{R}^n$. Then $w$-invex set a singleton set.
\end{theorem}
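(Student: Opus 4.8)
The plan is to mimic the arguments already used for Theorems 3.12 and 4.2, since this statement is essentially their combination for the $w$-(strictly) quasi-preinvex case. First I would fix $z_1, z_2 \in Y$, so that $h(z_1) = h(z_2) = \gamma$, and use the $w$-invexity of $X$ to guarantee that $w(z_2) + \delta \eta(z_1, w(z_2)) \in X$ for every $\delta \in [0,1]$; this is the step that makes the subsequent use of $h$ legitimate, because we need the candidate point to lie in the domain where $\gamma$ is a lower bound.

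Next I would invoke $w$-pre quasi-preinvexity of $h$ to obtain
\begin{equation*}
	h\big( w(z_2)+ \delta \eta(z_1,  w(z_2))\big) \leq \max\{h(z_1), h(z_2)\} = \gamma
\end{equation*}
for all $\delta \in [0,1]$. Since $\gamma = \min_{z\in X} h(z)$ and the point lies in $X$, the reverse inequality $h(w(z_2)+\delta\eta(z_1,w(z_2))) \geq \gamma$ also holds, forcing equality. Hence $w(z_2)+\delta\eta(z_1,w(z_2)) \in Y$ for all $\delta \in [0,1]$, which is exactly the definition of $Y$ being $w$-invex w.r.t.\ the same $\eta$ and $w$.

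For the singleton assertion, I would argue by contradiction: suppose $z_1, z_2 \in Y$ with $z_1 \neq z_2$. By $w$-strict quasi-preinvexity, for every $\delta \in (0,1)$,
\begin{equation*}
	h\big( w(z_2)+ \delta \eta(z_1,  w(z_2))\big) < \max\{h(z_1), h(z_2)\} = \gamma,
\end{equation*}
while the point $w(z_2)+\delta\eta(z_1,w(z_2))$ belongs to $X$ by $w$-invexity; this contradicts $\gamma = \min_{z\in X} h(z)$. Therefore $Y$ cannot contain two distinct points, so it is a singleton.

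I do not expect a genuine obstacle here; the only point requiring a little care is to record explicitly that $w$-invexity of $X$ is what places the interpolating point inside $X$, so that minimality of $\gamma$ can be applied — both halves of the proof hinge on that observation, and the rest is the same chain of inequalities used in Theorems 3.12 and 4.2.
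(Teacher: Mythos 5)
Your proposal is correct and follows essentially the same route as the paper's own proof: establish membership of the interpolating point in $X$ via $w$-invexity, bound $h$ there by $\max\{h(z_1),h(z_2)\}=\gamma$ to get $w$-invexity of $Y$, and derive the singleton claim from the strict inequality contradicting minimality. If anything, you are slightly more careful than the paper in explicitly recording that minimality of $\gamma$ supplies the reverse inequality forcing $h(w(z_2)+\delta\eta(z_1,w(z_2)))=\gamma$.
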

\begin{proof}
Let $z_1, z_2 \in Y$ and $\delta \in [0, 1]	$, then $z_1, z_2 \in X$ also $h(z_1)=\gamma$, $h(z_2)=\gamma$.
As $h$ is $w$-pre quasi preinvex function on $w$-invex set, we have $$ w(z_2)+\delta \eta(z_2, w(z_1)) \in X$$ and $$  h(w(z_2)+\delta \eta(z_2, w(z_1)))\leq \max \{h(z_1), h(z_2)\}=\gamma$$ which shows that $$ (w(z_2)+\delta \eta(z_2, w(z_1)) \in Y,$$ it demonstrate that $X$-invex set.\\
For the other part, suppose to the contrary that $z_1, z_2 \in X$, and $z_1\neq z_2$, $\delta \in (0, 1)$, we get $$ w(z_2)+\delta \eta(z_2, w(z_1)) \in X$$ As $h$ is $w$-strictly quasi preinvex on $X$, we get $$  h(w(z_2)+\delta \eta(z_2, w(z_1)))< \max \{h(z_1), h(z_2)\}=\gamma,$$ there is a contradiction.
\end{proof}


\begin{thebibliography}{33}
	\bibitem{Caristi}
	Caristi, G., Ferrara, M. and Stefanescu, A., 2010. {\it Semi-infinite multiobjective programming with generalized invexity}. Mathematical Reports, 62, pp.217-233.
	
	\bibitem{Das}
	Das, K. and Nahak, C., 2020. {\it Sufficiency and duality in set-valued optimization problems under $(p, r)-\rho-(\eta, \theta)$-invexity}. Acta Univ. Apulensis, 62, pp.93-110.

\bibitem{Edala}
Edalat, G., Sadeghieh, A. and Caristi, G., 2019. {\it Characterization of efficient solutions in nonsmooth multiobjective problems using generalized invexity}. Numerical Computations: Theory and Algorithms NUMTA 2019, p.136.

\bibitem{Ferrara}
Ferrara M, Mititelu S. Mond-Weir {\it Duality in vector programming with generalized invex functions on differentiable manifolds}. Balkan Journal of Geometry and Its Applications. 2006 Jan 1;11(2):80.
	
		\bibitem{Hanson M}
	
	Hanson, M.A., 1981. {\it On sufficiency of the Kuhn-Tucker conditions}. Journal of Mathematical Analysis and Applications, 80(2), pp.545-550.
	
	\bibitem{Jeyakumar}
	Jeyakumar, V. {\it Convexlike alternative theorems and mathematical programming.} Optimization 16, no. 5 (1985): 643-652.
	
	\bibitem{Mititelu}
	
	12. Mititelu, S.: {\it Generalized invexity and vector optimization on differentiable manifolds}. Differ. Geom. Dyn. Syst. 3, 21–31 (2001)
	
	
	\bibitem{Pini}
	Pini, R., 1994. {\it Convexity along curves and invexity}. Optimization, 29(4), pp.301-309.
	\bibitem{p invexity}
	Pini, R., 1991. Invexity and generalized convexity. Optimization, 22(4), pp.513-525.
	\bibitem{Kiliçman}
	Kiliçman, A. and Saleh, W., 2018. {\it Generalized preinvex functions and their applications.} Symmetry, 10(10), p.493.
	\bibitem{Rapcsak}
	Rapcsak, T., 1991. {\it Geodesic convexity in nonlinear optimization}. Journal of Optimization Theory and Applications, 69(1), pp.169-183.
	
	\bibitem{RUEDA}
	Rueda N. G,  And Hanson M. A.  {\it Optimality criteria [or mathematical programmmg 
		mvolving generalized invexlty}, J. Math. Anal. .4ppl. 130 (1988). 375-385
	
	
	\bibitem{Reddy}
	Reddy, L.V., Singh, A. and Pandey, S.P., 2019. {\it Generalized Invexity and Semi–Continuity In Mathematical Programming}. International Journal of Applied Engineering Research, 14(12), pp.2960-2967.
	
	\bibitem{Udriste}
	Udriste, C., 2013. {\it Convex functions and optimization methods on Riemannian manifolds} (Vol. 297). Springer Science and Business Media.

\bibitem{Weir T}
Weir T, Jeyakumar V. {\it A class of nonconvex functions and mathematical programming}. Bulletin of the Australian Mathematical Society. 1988 Oct;38(2):177-89.	
\bibitem{Weir}

Weir, T., and Mond, B. (1988). {\it Pre-invex functions in multiple objective optimization}. Journal of Mathematical Analysis and applications, 136(1), 29-38.


\bibitem{Li}
Yang, X.M. and Li, D., 2001. {\it Semistrictly preinvex functions}. Journal of Mathematical Analysis and Applications, 258(1), pp.287-308.

	\bibitem{Yang}

Yang X.M., Yang X.Q., Teo K.L. {\it Characterizations and applications of prequasi-invex functions.} Journal of Optimization Theory and Applications. 2001 Sep;{\bf110(3)}:645-68.
\bibitem{Zălinescu}
Z\v{a}linescu, C., 2014. {\it A critical view on invexity.} Journal of Optimization Theory and Applications, 162(3), pp.695-704.
\bibitem{Zalmai}
Zalmai, G.J., 1990. {\it Generalized sufficiency criteria in continuous-time programming with application to a class of variational-type inequalities}. Journal of mathematical analysis and applications, 153(2), pp.331-355.





	\bibitem{Rockafellar}
	Rockafellar, R.T.: {\it Convex Analysis.}, Princeton University Press, Princeton (1970).
	
	
	
	
\end{thebibliography}
\end{document}